\newtheorem{theorem}{Theorem}[section]
\newtheorem{lemma}[theorem]{Lemma}
\theoremstyle{definition}
\theoremstyle{remark}
\newtheorem{remark}[theorem]{Remark}
\newcommand{\bes}{{\begin{split}}}
\newcommand{\ees}{{\end{split}}}
\newcommand{\bees}{{\begin{equation}\begin{split}}}
\newcommand{\es}{{\end{split}\end{equation}}}
\newcommand{\erre}{{\mathbb R}}
\newcommand{\comple}{{\mathbb C}}
\newcommand{\bea}{\begin{eqnarray}}
\newcommand{\eea}{\end{eqnarray}}
\newcommand{\be}{\begin{equation}}
\newcommand{\ee}{\end{equation}}
\newcommand{\f}{\frac}
\newcommand{\ve}{\varepsilon}
\newcommand{\om}{\omega}
\newcommand{\ov}{\overline}
\numberwithin{equation}{section}
\begin{document}
\large

\title{Lack of ground state for NLS on bridge-type graphs}


\author{Riccardo Adami, Enrico Serra, Paolo Tilli}

\address{Dipartimento di Scienze Matematiche G.L. Lagrange, 
Politecnico di Torino, Italy}




\date{April 16, 2014}

\dedicatory{Contribution for the proceedings of the Workshop on
  ``Mathematical Technology of Networks'', ZiF Bielefeld, 4-7 December
  2013.}

\keywords{}

\tikzstyle{nodo}=[circle,draw,fill,inner sep=0pt,minimum size=\widthof{k}]
\tikzstyle{infinito}=[circle,inner sep=0pt,minimum size=0mm]

\begin{abstract} 
We prove the nonexistence of ground states for NLS on 
bridge-like graphs,
i.e. graphs with two
halflines and four vertices, of which two at infinity, 
with Kirchhoff matching conditions.
By ground
state we mean any minimizer of the energy functional among all functions
with the same mass.
\end{abstract}

\maketitle

\section{Introduction}

A {\em graph} $ {\mathcal G}$  consists of a set $V$ of points $v_1, \dots,
v_{N_v}$ called {\em vertices} and a set $A$ of {\em edges} $e_1, \dots
e_{N_e}$ joining pairs of vertices.
Multiple
connections between the same couple of vertices (i.e. several edges
between the same vertices) and also edges connecting a vertex with
itself, called {\em self-loops}, are allowed. We assume that the
number $N_v$ of vertices as well as the number $N_e$ of edges, are finite.

\noindent
We require ${\mathcal G}$ to be a {\em metric} graph, that is
we identify every edge with a real interval, namely
$$
e_j \ \longmapsto \ I_j : = (0, l_j)
$$
with $l_j \in (0, + \infty]$.

\noindent

Notice that a given vertex $v$ can act both as the left endpoint for an
edge and as the right endpoint for another one. It is then meaningful
to define on the set $A$ the functions $\sc R$ and $\sc L$, such that
${\sc L} (e_j) = v$ if $v$ is the left endpoint of $e_j$, and  ${\sc R}
(e_j) = v$ if $v$ is the right endpoint of $e_j$.

\noindent
Owing to the metric structure, it is natural to define functions 
 $u : {\mathcal G} \longrightarrow \comple$ as
$$ u : = ( u_1, \dots u_{N_e}),$$
where $u_j : I_j  \longrightarrow \comple$ is the restriction of
$u$ to the edge $e_j$. The definition of $u$ is made
complete by specifying the value of $u$ at any vertex of
$\mathcal G$.

\noindent
We define $L^p$ spaces on $\mathcal G$ according to the norm
\be \nonumber
\| u \|_{L^p ({\mathcal G})}^p \ = \ \sum_{k=1}^{N_e} \| u_k \|_{L^p (I_k)}^p. 
\ee
(In the following we will use the shorthand notation $\| u \|_p \ =
\ \| u \|_{L^p ({\mathcal G})}$ and $\| u_j \|_p \ =
\ \| u_j \|_{L^p (I_j)}$.)

\noindent
Analogously, we define the space $H^1 ({\mathcal G})$ as the subspace
of $L^2({\mathcal G})$ consisting of functions $u$ such that $ u' : =
( u_1', \dots u_{N_e}')$ is an element of  $L^2({\mathcal G})$
too, and satisfies the {\em continuity condition} at vertices, that
states that the limit of $u (x)$ as $x$ approaches a vertex $v$
exists and is independent of the particular edge on which $x$ runs.

Owing to the previous definitions, one can introduce the {\em energy
  functional}
\be \label{energy} 
E (u, {\mathcal G}) \ = \ \f 1 2 \| u ' \|_2^2 - \f 1 p  \| u
 \|_p^p, 
\ee
defined on any function $u \in H^1 ({\mathcal G})$. It is well-known that
this  functional corresponds to the conserved energy of the equation
$$
i \partial_t u (t) = - \Delta u (t) - | u (t) |^{p-2} u (t),
$$
i.e., a nonlinear Schr\"odinger equation on $\mathcal G$ with 
nonlinearity power $p-1$ (for a general introduction to NLS see \cite{c03}). 
In such equation multiplication and powers are to be understood
componentwise (i.e. edge by edge), while the definition of the Laplacian has to be 
completed by Kirchhoff boundary conditions (see the end of this
Section, and \cite{ks99} for the classification of all self-adjoint
vertex conditions).

\noindent
There is nowadays a huge literature on {\em quantum graphs}, i.e. metric
graphs with differential or pseudo-differential operators acting on
functions defined on it (\cite{n85,bcfk06,beh08,e08,k04,k05}). However, most of
such a literature is concerned with linear systems, while for
nonlinear systems the research started more than two decades ago
(\cite{am94}), got relevant results, but has remained less extensive
(\cite{bc08,ch10,cm07,cms13,Sob10}). Important results have been obtained
for dispersive estimates, that are often used to link linear and
nonlinear evolutions (\cite{b11,b12}).

\noindent

NLS-type equations are currently used to model several
systems where the propagation of waves in branched structures is
relevant: Bose-Einstein condensates in ramified traps, optical fibers, T-junctions and
others. 
In all applications it proves important to get information on
stationary solutions (i.e. the modes of the system) and on their
stability. The stationary solutions for which one
can typically state a stability result are the {\em ground states}
of the systems, i.e., the minimizers, possibly under suitable
constraints, of the functional \eqref{energy} (\cite{cl82,w86,gss87}). Observe indeed that
this functional is not  bounded from below, since, for all non-trivial
$u$, $E (\lambda u)\to -\infty$
 as $\lambda \to +\infty$. 
However, as soon as the nonlinearity power $p$ is {\em
  subcritical}, i.e.
\be \nonumber 
2 \ < \ p \ < 6, 
\ee
the restriction of $E$ to the
manifold of functions $u$ sharing the same, fixed, value $\mu$
for the
{\em mass}, namely the constraint
\be \nonumber 
\| u \|_2^2 \ = \ \mu > 0,
\ee
is bounded from below.
Indeed, by the Gagliardo-Nirenberg inequality
$$ \| u \|_p \ \leq \ C \, \| u \|_2^{\f 12 + \f 1p}  \| u \|_{H^1}^{\f 12 - \f 1p}$$ 
that can be easily
extended to graphs with a finite number of edges, one has
$$
E (u, {\mathcal G}) \ \geq \ \f 1 2 \| u ' \|_2^2  - C   \| u ' \|_2^{\f p
  2 - 1} - C
$$
where the  mass constraint was taken into account, and
lower boundedness immediately follows. So
the following questions arise.
\begin{itemize}
\item[(i)]  Is the infimum of $E$ on the space $H^1_\mu = \{
u \in H^1, \ \| u \|_2^2 = \mu \}$ larger or smaller than the infimum on
the line? 
\item[(ii)]  Is the infimum attained?
\end{itemize}
Question (ii) can be rephrased in a more physical language, as
follows: does there exist a ground state?

\noindent

The answers to (i) and (ii)
depend on the nature of $\mathcal G$, as the following
examples illustrate.
\begin{enumerate}
\item ${\mathcal G} = \erre$. The set of minimizers is given by
the {\em soliton}
\be \nonumber 
\phi_\mu (x) \ = \ C \mu^{\f 2 {6-p}} {\rm{sech}}^{\f 2 {p-2}} (c
\mu^{\f {p-2} {6-p}} x),
\ee
where $C$ and $c$ are constants depending on $p$ only,
and by the orbit of $\phi_\mu$ with respect to translations and multiplication by a
phase. Namely, the only minimizers are given by the functions
$$e^{i \theta} \phi_\mu (\cdot - y), \qquad  \theta \in [0, 2 \pi), \,
  y \in \erre$$ (\cite{cl82,gss87,gss90}).

\begin{remark} \label{inf}
By this classical result one immediately has that, if at least one edge of
$\mathcal G$ is
infinite, then $\inf_{H_\mu^1} E (u, \mathcal G) \leq E (\phi_\mu,
\erre)$. Indeed, assuming that first edge is infinite, consider
the functions
$$
u_{(n)}(x) : = ( A_n \chi_+(x) \phi_{\mu} (x - n), 0, \dots, 0 )
$$
where $\chi_+$ is a smooth function, supported on $\erre^+$, with
$\chi_+ (x) = 1$ for all $x > 1$, and $A_n$ are constants
such that  $\| u_{(n)} \|_2^2 = \mu$. Then, it is easily seen that 
$E (u_{(n)},{\mathcal G})$ converges to $E (\phi_\mu, \erre)$ as $n$ goes
to infinity, so that $\inf_{H_\mu^1} E (u, \mathcal G) \leq E (\phi_\mu,
\erre)$.
\end{remark}

\item  ${\mathcal G} = \erre^+$. In this case the only 
positive minimizer is given
  by the ``half-soliton'', i.e., by the  restriction of $\phi_{2\mu}$ 
to the positive halfline. Any further minimizer can be obtained 
by multiplying $\phi_{2\mu}$ by a phase factor.

\item ${\mathcal G} = {\mathcal S}_{n, \infty}$ with $n \geq 3$, 
 namely the star-graph made up of $n \geq 3$ halflines (in Figure 1
 the case $n=3$ is plotted). In that
  case
$$ \inf_{u \in H^1_\mu} E (u,  {\mathcal S}_{n, \infty}) \ = \ E
  (\phi_\mu, \erre) $$
but the infimum is not achieved (\cite{acfn12}).
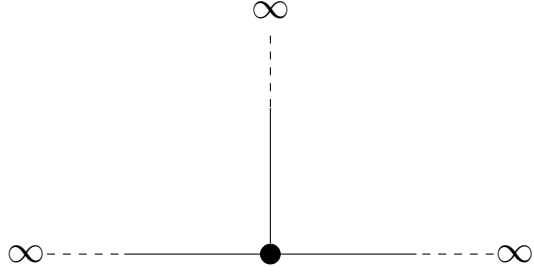
\begin{figure}[h] 
\begin{center}
\begin{tikzpicture}
[scale=1.3,style={circle,
              inner sep=0pt,minimum size=7mm}]
\node at (-2.5,0) [infinito]  (1) {$\infty$};
\node at (0,0) [nodo] (2) {};
\node at (2.5,0) [infinito]  (3) {$\infty$};
\node at (0,2.5) [infinito]  (4) {$\infty$};
\node at (1.5,0)  [minimum size=0pt] (5) {};
\node at (-1.5,0) [minimum size=0pt] (6) {}; 
\node at (0,1.5) [minimum size=0pt] (7) {};

\draw[dashed] (1) -- (6);
\draw[dashed] (3) -- (5);
\draw[dashed] (7) -- (4);
\draw [-] (2) -- (6) ;
\draw [-] (5) -- (2) ;
\draw [-] (2) -- (7) ;

 %
\end{tikzpicture}
\end{center}
\caption{The three-star graph ${\mathcal S}_{3, \infty}$.}
\end{figure}

\item ${\mathcal G} = {\mathcal B}_3$, i.e. the three-bridge graph
  portrayed in
  Figure 2.
\begin{figure}[h] \label{b3}
\begin{center}
\begin{tikzpicture}
[scale=1.3,style={circle,
              inner sep=0pt,minimum size=7mm}]
\node at (-2,0) [infinito]  (1) {$\infty$};
\node at (0,0) [nodo] (2) {};
\node at (1,0) [nodo] (3) {};
\node at (3,0) [infinito]  (4) {$\infty$};
\node at (1.7,0)  [minimum size=0pt] (5) {};
\node at (-0.7,0) [minimum size=0pt] (6) {};
\draw[dashed] (6) -- (1);
\draw [-] (6) -- (2);
\draw [-] (5) -- (3) ;
\draw [-] (2) -- (3) ;
\draw[dashed] (5) -- (4); 
\draw [-] (2) to [out=-40,in=-140] (3);
 \draw [-] (2) to [out=40,in=140] (3);
\end{tikzpicture}
\end{center}
\caption{The three-bridge ${\mathcal B}_3$.}
\end{figure}
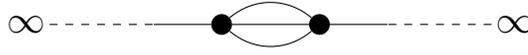

\noindent
This graph is Eulerian, i.e. it can be unfolded into a
  line, as shown in Figure 3. 
\begin{figure}[h] \label{b3u}
\begin{center}
\begin{tikzpicture}
[scale=1.3,style={circle,
              inner sep=0pt,minimum size=7mm}]
\node at (-2,0) [infinito]  (1) {$\infty$};
\node at (0,0) [minimum size=0pt] (2) {};
\node at (1,0) [minimum size=0pt] (3) {};
\node at (3,0) [infinito]  (4) {$\infty$};
\node at (1.7,0)  [minimum size=0pt] (5) {};
\node at (-0.7,0) [minimum size=0pt] (6) {};

\draw[rounded corners=8pt]
(6) -- (2)   to [out=40,in=140] (3) -- (0.1,0) to [out=-40,in=-140]
(3) -- (5);


\draw[dashed] (6) -- (1);
\draw [dashed] (5) -- (4); 
  %
\end{tikzpicture}
\end{center}
\caption{The unfolded three-bridge ${\mathcal B}_3$.}
\end{figure}
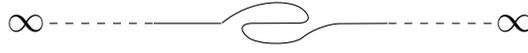
Correspondingly, every $u \in H^1_\mu ({\mathcal G})$ 
unfolds
into
  a function $\widetilde u \in H^1_\mu (\erre)$ such that $E(u,
  {\mathcal B}_3) = E (\widetilde u, \erre)$. Notice that the Eulerian
  path on $\mathcal B_3$ crosses every vertex three times, so that the
  unfolded function 
$\widetilde u$ must assume three times the values at vertices. This
  implies that $\widetilde u$ cannot be a soliton,
so the infimum
  cannot be attained.

\item ${\mathcal G} = {\mathcal B}_2$, i.e. the two-bridge graph in
  Figure 4.

\noindent
This time, the graph is not Eulerian, so the
  problem is not immediate to solve. We will show in the next section
  that the situation is exactly the same as in the previous example:
  $\inf_{H^1_\mu} E (u, {\mathcal B}_2) = E (\phi_\mu, \erre)$ but the
  infimum is not attained. The same holds for any $2k$-bridge, and
  this is the main result of this note.

\begin{figure}[h] \label{2b}
\begin{center}
\begin{tikzpicture}
[scale=1.3,style={circle,
              inner sep=0pt,minimum size=7mm}]
\node at (-2,0) [infinito]  (1) {$\infty$};
\node at (0,0) [nodo] (2) {};
\node at (1,0) [nodo] (3) {};
\node at (3,0) [infinito]  (4) {$\infty$};
\node at (1.7,0)  [minimum size=0pt] (5) {};
\node at (-0.7,0) [minimum size=0pt] (6) {};

\draw[dashed] (5) -- (4);
\draw[dashed] (6) -- (1);
\draw [-] (2) -- (6) ;
\draw [-] (5) -- (3) ;
 \draw [-] (2) to [out=-40,in=-140] (3);
 \draw [-] (2) to [out=40,in=140] (3);

\end{tikzpicture}
\end{center}
\caption{The two-bridge ${\mathcal B}_2$.}
\end{figure}
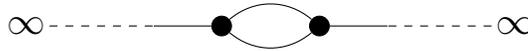

\item ${\mathcal G} =  {\mathcal S}_{2+1}$, i.e. the star-graph
  consisting of two infinite and one finite edge, displayed in Figure
  5. 
In this case $\inf_{H^1_\mu}
 E (u,  {\mathcal S}_{2+1}) <   E (\phi_\mu, \erre)$ and the infimum
 is attained, so it is actually a minimum. This result will be proved in
 the forthcoming paper \cite{ast14}.
\begin{figure}[h] \label{2+1} 
\begin{center}
\begin{tikzpicture}
[scale=1.3,style={circle,
              inner sep=0pt,minimum size=7mm}]
\node at (-2.5,0) [infinito]  (1) {$\infty$};
\node at (0,0) [nodo] (2) {};
\node at (2.5,0) [infinito]  (3) {$\infty$};
%
\node at (1.5,0)  [minimum size=0pt] (5) {};
\node at (-1.5,0) [minimum size=0pt] (6) {}; 
\node at (0,1.5) [nodo] (7) {};

\draw[dashed] (1) -- (6);
\draw[dashed] (3) -- (5);
\draw [-] (2) -- (6) ;
\draw [-] (5) -- (2) ;
\draw [-] (2) -- (7) ;

 %
\end{tikzpicture}
\end{center}
\caption{The three-star graph ${\mathcal S}_{2+1}$ with two infinite
  and one finite edge.}
\end{figure}
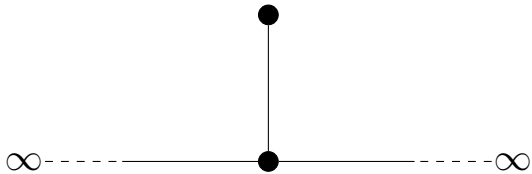

\item The {\em exceptional graph} ${\mathcal E}_3$ displayed in Figure
  6. In this case $\inf_{H^1_\mu} E (u, {\mathcal B}_3) = E
  (\phi_\mu, \erre)$ and the minimum is attained. Details will be
  given in \cite{ast14}.

\begin{figure}[h] \label{e3}
\begin{center}
\begin{tikzpicture}
[scale=1.3,style={circle,
              inner sep=0pt,minimum size=7mm}]
\node at (-3,0) [infinito]  (1) {$\infty$};
\node at (0,0) [nodo] (2) {};
\node at (3,0) [infinito]  (3) {$\infty$};
\node at (2,0)  [minimum size=0pt] (5) {};
\node at (-2,0) [minimum size=0pt] (4) {};
\node at (0,1) [nodo] (6) {};
\node at (0,2.4) [nodo] (7) {};
\draw[dashed] (4) -- (1);
\draw [-] (4) -- (2);
\draw [-] (5) -- (2) ;
\draw[dashed] (5) -- (3); 
\draw (0,0.5) circle (0.5cm);
\draw (0,1.7) circle (0.7cm);
\draw (0,2.7) circle (0.3cm);
  %
\end{tikzpicture}
\end{center}
\caption{The exceptional graph ${\mathcal E}_3$. Edges connecting the
  same couple of vertices have the same length.}
\end{figure}
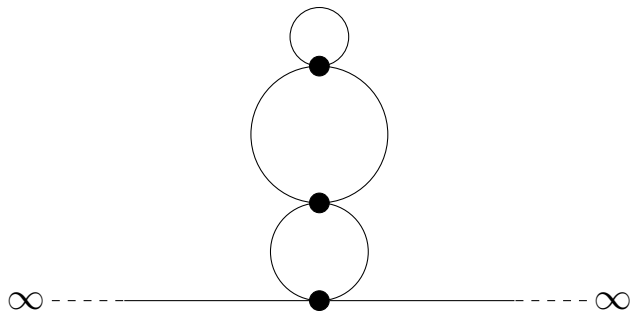

\end{enumerate}

\bigskip

In this note we treat the case of the $n$-bridge graphs ${\mathcal B}_n$,  i.e. a
graph consisting of two 
halflines whose origins are connected by $n$ finite edges (not necessarily of the same length).


We prove the
following
\begin{theorem} \label{theo}
Let $\mathcal B_n$, $n \geq 2$, be an $n$-bridge graph. Consider
the energy functional $E$ defined in \eqref{energy} with $2 < p < 6$.
Then,

$(a)$ $\inf_{H^1_\mu} E (u, \mathcal B_n) \ = \ E (\phi_\mu, \erre)$.

$(b)$ The infimum is not attained.
\end{theorem}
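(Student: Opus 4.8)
Everything rests on the symmetric decreasing rearrangement onto the line, in the spirit of \cite{acfn12}; the ingredient special to bridge graphs is a combinatorial analysis of superlevel sets. For $u\in H^1(\mathcal B_n)$ write $\Omega_t=\{x:|u(x)|>t\}$ and $N(t)=\#\{x:|u(x)|=t\}$.

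\emph{Part $(a)$.} The inequality $\inf_{H^1_\mu}E(u,\mathcal B_n)\le E(\phi_\mu,\erre)$ is Remark \ref{inf}, since $\mathcal B_n$ carries two infinite edges. For the reverse inequality, given $u\in H^1_\mu$ I would replace $u$ by $|u|$ (still admissible, with $\||u|'\|_2\le\|u'\|_2$, hence no larger energy) and let $u^\ast:\erre\to[0,\infty)$ be the symmetric decreasing rearrangement of $|u|$. Equimeasurability gives $\|u^\ast\|_2^2=\mu$ and $\|u^\ast\|_p=\|u\|_p$, while the coarea formula combined with Cauchy--Schwarz on the level sets yields $\|(u^\ast)'\|_2\le\|u'\|_2$, so that $E(u,\mathcal B_n)\ge E(u^\ast,\erre)\ge E(\phi_\mu,\erre)$, \emph{provided} $N(t)\ge 2$ for a.e. $t\in(0,\max|u|)$. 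This last fact is where the graph enters: for a.e. such $t$ the set $\Omega_t$ is a nonempty proper open subset of $\mathcal B_n$, and no connected component of it can have a single boundary point, since that point would be a cut point of $\mathcal B_n$ --- namely $v_1$, $v_2$, or an interior point of a halfline --- and each of the two pieces it determines carries a whole halfline, on which $|u|\to0<t$; hence every component of $\Omega_t$ has at least two boundary points, so $N(t)\ge\#\partial\Omega_t\ge2$.

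\emph{Part $(b)$.} Let $u$ be a minimizer. Then $|u|$ is one too, so $u\ge 0$; and since a minimizer solves $-u''-u^{p-1}=\lambda u$ on each edge with Kirchhoff conditions at $v_1,v_2$ (with $\lambda<0$, forced by the halfline decay), the maximum principle gives $u>0$ on $\mathcal B_n$. Tracking equality through $E(u,\mathcal B_n)\ge E(u^\ast,\erre)\ge E(\phi_\mu,\erre)$ forces $u^\ast=\phi_\mu$ --- so the distribution function $\rho$ of $u$ is continuous and strictly decreasing on $(0,M)$, $M=\|\phi_\mu\|_\infty$ --- and, from equality in the Cauchy--Schwarz step, $N(t)=2$ for a.e. $t\in(0,M)$; combined with the component analysis of $(a)$ this means that for a.e. $t$ the set $\Omega_t$ is \emph{connected} and has exactly two boundary points. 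Now set $m=\min\{u(x):x\ \text{in the compact part of}\ \mathcal B_n\}>0$, and argue by counting boundary points of $\Omega_t$. If $u(v_1)=u(v_2)=m$: for a.e. $t$ slightly above $m$ the vertices are outside $\Omega_t$, and one checks from connectedness and $N(t)=2$ that $\Omega_t$ can meet at most one connecting edge, so $u\equiv m$ on the other $n-1\ge1$ edges --- a plateau, contradicting the strict monotonicity of $\rho$ --- unless $\Omega_t$ reaches a halfline, in which case it gains at least two boundary points there and, as soon as it also touches the compact part, $N(t)\ge 4$, while if it does not touch the compact part then $u\le t$ on every connecting edge, again a plateau. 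If instead $u(v_1)=m<u(v_2)$ (or symmetrically): for $t\in(u(v_1),u(v_2))$ the component of $\Omega_t$ at $v_2$ reaches along all $n$ connecting edges but cannot reach $v_1$ (where $u=m<t$), giving one boundary point per edge plus one on the halfline at $v_2$, so $N(t)\ge n+1\ge 3$. Finally, if $u(v_1),u(v_2)>m$, then $m$ is attained at an interior point $z_0$ of some connecting edge; for $t$ slightly above $m$ the vertices lie in $\Omega_t$, so $\Omega_t$ has at least one boundary point on each halfline near $v_1,v_2$ and at least two more straddling $z_0$, whence $N(t)\ge 4$. Each case contradicts $N(t)=2$ a.e., so no minimizer exists.

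I expect the combinatorial part of $(b)$ to be the real obstacle --- pinning the minimum of $u$ to the two vertices and reading off the boundary-point counts --- and it is precisely this that makes the non-Eulerian bridges (beginning with $\mathcal B_2$) less immediate than the Eulerian ones of the previous examples. Two subordinate steps need routine care: the equality discussion in the Pólya--Szegő inequality (non-regular values of $t$, the set $\{u'=0\}$), and the reduction to $u>0$; alternatively one bypasses the maximum principle by noting that a minimizer vanishing at a vertex would split into pieces of positive masses summing to $\mu$, so that the strict superadditivity $E(\phi_{a+b},\erre)<E(\phi_{a},\erre)+E(\phi_{b},\erre)$ of the subcritical soliton energy would contradict minimality.
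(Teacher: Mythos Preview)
Your argument is correct, and it follows a genuinely different route from the paper's own proof.

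The paper proves a \emph{Comparison Lemma}: given two nontrivial edge functions $u_1,u_2$, one can build a single interval $I$ and $w\in H^1(I)$ with $\|w\|_2^2=\|u_1\|_2^2+\|u_2\|_2^2$, with $w$ matching the boundary values of one of the $u_i$, and with $E(w,I)\le E(u_1,I_1)+E(u_2,I_2)$ (strict unless both are the same constant). The proof of Theorem~\ref{theo} then goes: for odd $n$ the graph is Eulerian, so any $u$ unfolds to $\widetilde u\in H^1_\mu(\erre)$ with the same energy, and $\widetilde u$ cannot be a soliton because vertex values are hit $n\ge 3$ times; for even $n$, apply the lemma to two connecting edges to pass from $\mathcal B_n$ to $\mathcal B_{n-1}$ without raising the energy, and invoke the odd case.

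Your proof instead rearranges onto $\erre$ via P\'olya--Szeg\H{o} and reads off both $(a)$ and $(b)$ from the level-set count $N(t)$. The topological input --- that every cut point of $\mathcal B_n$ leaves a halfline in each piece --- replaces the Eulerian unfolding and makes the odd/even dichotomy disappear. This is exactly the mechanism the authors go on to develop in the follow-up paper cited as \cite{ast14}, where it yields a full classification; so what you have written is, in effect, the specialization of that later, more general machinery to the bridge case. What the paper's approach buys is elementarity and self-containment (no rearrangement equality cases, no maximum principle); what yours buys is uniformity in $n$ and a transparent obstruction --- the contradiction $N(t)\ge 3$ --- that visibly persists under perturbations of the graph.

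Your flagged subordinate steps are indeed routine: the P\'olya--Szeg\H{o} equality discussion is standard once $u^\ast=\phi_\mu$ forces $|\{u'=0\}|=0$, and either of your two routes to $u>0$ works. In Case~1 of part $(b)$ your subcase ``$\Omega_t$ reaches a halfline and also touches the compact part'' is actually vacuous once $v_1,v_2\notin\Omega_t$ and $\Omega_t$ is connected, but handling it conservatively does no harm.
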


This is the first result on the minimization of NLS energy on
non-star graphs. A more general result, including  cases
where the infimum is attained, will be proved in \cite{ast14}. 

In order to illustrate the physical 
meaning of the absence of the ground state,
 consider for instance the case of
a Bose-Einstein condensate in a ramified trap with two long
branches. Under the critical temperature, a macroscopic fraction of
the
 particles of the system
is known to collapse in the ground state of the Gross-Pitaevskii
functional (i.e. the energy $E$ with $p=4$). In absence of a
ground state, one could imagine the system  that follows a
minimizing sequence. Of course, an actual trap will
always be finite and therefore a ground state will exist.
Nevertheless, provided that two branches exhibit a larger lengthscale than
the rest of the graph and that some other technical hypotheses are
fulfilled, the ground state should not be sensitively
different from a soliton escaping along one of the branches.

Before proving Theorem \ref{theo}, let us comment on the matching conditions at
vertices. Even though our  nonexistence result holds for bridges only,
the argument 
we give for vertex conditions is general, see also \cite{ast14}. 

Any minimizer is a stationary point for the
unconstrained functional
$$
\widetilde E (u) \ = \ E (u) + \nu \| u \|_2^2,
$$
where $\nu$ is a Lagrange multiplier. Now, since $\widetilde E$ is differentiable on $H^1({\mathcal G})$,
\begin{equation*} 
\begin{split}
\nabla \widetilde E (u)\eta \ = \ & \Re \int_{\mathcal G}  \left( \bar u'\eta'  - |u|^{p-2} \bar u \eta+ 2
\nu \bar u \eta \right) dx 
\\
 \  =  \ & \sum_{j=1}^{N_e} \Re \int_{I_j}
 \left( \bar u_j' \eta_j'  - |u_j|^{p-2} \bar u_j \eta_j + 2 \nu \bar u_j 
\eta_j \right) dx =0
\end{split}
\end{equation*}
for all $\eta\in H^1({\mathcal G})$.
By standard arguments (integrating by parts and using the
Euler--Lagrange equation in each interval), the preceding identity
yields 
\begin{equation*} \begin{split}
\Re \sum_{j=1}^{N_e}  \bar u'_j \eta_j |_{0}^{l_j}=0.
\end{split}
\end{equation*}
Focusing on vertices instead of edges, this can be equivalently written as
\begin{equation*} \begin{split}
\Re \sum_{k=1}^{N_v} \eta (v_k)  \left( \sum_{{\rm{R}} (e_j) = v_k}
\bar u'_j (l_j) - \sum_{{{\rm{L}}} (e_j) = v_k}
\bar u'_j (0) \right) =0.
\end{split}
\end{equation*}
Finally, by the arbitrariness of $\eta$, one concludes
$$
\sum_{{\rm{{R}}} (e_j) = v_k}
u'_j (l_j) - \sum_{{\rm{L}} (e_j) = v_k}
u'_j (0) \ = \ 0,\qquad\text{ for all } k,
$$ 
which are the well-known Kirchhoff conditions.

\section{Proof}
We start by giving a lemma that compares the contributions of two
different edges to the energy and shows how to construct a third edge
and a function which, properly inserted in the graph, makes the energy
decrease.
Theorem \ref{theo}
then follows as an easy consequence.

\begin{lemma}[Comparison] \label{comparison}
For $i = 1,2$, let $l_i$ be an element of $(0, + \infty]$ and denote
  by $I_i$ the
interval $(0, l_i)$.

Given a pair of functions $u_i \in H^1 (I_i) \backslash \{ 0 \}$, there exist an
interval $I = (0, l)$ with  $l \in (0, + \infty]$ and a function $w
  \in H^1 (I)$, such that
\begin{enumerate}
\item $\| w \|^2_{L^2 (I)} \ = \ \| u_1 \|^2_{L^2 (I_1)} + \| u_2 \|^2_{L^2 (I_2)}.$
\item For either $i = 1$ or $i = 2$, $w(0) = u_i (0)$ and $w(l) = u_i
  (l_i).$
\item $E (w,I) \, \leq \, E (u_1, I_1) + E (u_2, I_2)$.
\end{enumerate}
Furthermore, 
$$
 E (w,I) \, < \, E (u_1, I_1) + E (u_2, I_2),
$$
unless $u_1 = u_2= c$ for some constant $c$.

\end{lemma}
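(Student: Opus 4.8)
The plan is to prove the lemma by an explicit \emph{construction} of $w$, using one--dimensional rearrangements ``anchored'' at the largest of the four boundary values, so that requirement~(2) (prescribed boundary values) does not conflict with~(3) (energy decrease). It suffices to treat $u_1,u_2\ge0$: replacing $u_i$ by $|u_i|$ preserves $\|u_i\|_2$ and $\|u_i\|_p$ and does not increase $\|u_i'\|_2$, hence does not increase $E(u_i,I_i)$. Reversing the intervals if necessary we may assume $u_i(0)\le u_i(l_i)$, and, exchanging indices, $u_1(l_1)\ge u_2(l_2)$; set $b:=u_1(l_1)$ and $a:=u_1(0)$, so that $a\le b$, $b$ dominates all four boundary values, and in particular both boundary values of $u_2$ are $\le b$.

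\emph{Construction and conditions (1), (2).} Writing $\rho_i(t)=|\{u_i>t\}|$, we build $w$ on an interval $I=(0,l)$ by concatenating at level $b$ two profiles: (i) a ``lower'' profile obtained from $u_1$ by moving the mass of $u_1$ at levels below $a$ onto a flat plateau at height $a$, rearranging the mass of $u_1$ between levels $a$ and $b$ as a single increasing arc from $a$ to $b$, and setting aside the mass of $u_1$ above level $b$; and (ii) an ``upper'' profile $\ge b$ with endpoint values $b$, symmetric about its midpoint, consisting of a plateau at height $b$ absorbing (at fixed mass) all mass of $u_2$ below level $b$, together with a symmetric bump above level $b$ carrying the combined mass of $u_1$ and $u_2$ above level $b$. (When $a=b=0$ one takes instead the symmetric decreasing rearrangement of $u_1\sqcup u_2$ on a single interval, possibly a half--line, reading $w(l)=0$ as a limit.) By construction $w(0)=u_1(0)$ and $w(l)=u_1(l_1)$, which is~(2) with $i=1$, and since every move preserves mass, $\|w\|_2^2=\|u_1\|_2^2+\|u_2\|_2^2$, which is~(1).

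\emph{Condition (3).} For the potential term, moving mass from a level $\phi\le c$ to a plateau at height $c$ (at fixed mass) changes $\|\cdot\|_p^p$ by $\int\phi^2(c^{p-2}-\phi^{p-2})\ge0$ because $p>2$, while level--preserving rearrangements leave it unchanged, so $\|w\|_p^p\ge\|u_1\|_p^p+\|u_2\|_p^p$. For the kinetic term, the coarea formula gives $\|u_i'\|_2^2=\int_0^\infty\big(\sum_{u_i(x)=t}|u_i'(x)|\big)\,dt$, and with $-\rho_i'(t)=\sum_{u_i(x)=t}|u_i'(x)|^{-1}$ and Cauchy--Schwarz one gets $\|u_i'\|_2^2\ge\int_0^\infty N_i(t)^2/(-\rho_i'(t))\,dt$, where $N_i(t)=\#\{u_i=t\}$; the profile $w$ realizes equality in the analogous formula, with $N_w=1$ on the arc over $(a,b)$ and $N_w=2$ on the bump. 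Since both boundary values of each $u_i$ lie below every level $t>b$, each $u_i$ enters and leaves $\{u_i>t\}$, so $N_i(t)\ge2$ for a.e.\ $t>b$, while $N_1(t)\ge1$ on $(a,b)$; combining this with $\frac1A\ge\frac1{A+B}$ and $\frac4A+\frac4B\ge\frac4{A+B}$ and discarding the nonnegative contributions of levels below $a$ yields $\|w'\|_2^2\le\|u_1'\|_2^2+\|u_2'\|_2^2$. Adding the two estimates gives $E(w,I)\le E(u_1,I_1)+E(u_2,I_2)$.

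\emph{Strict inequality; the main obstacle.} Equality in~(3) forces equality in both estimates: the potential one forces $u_1\ge a$ and $u_2\ge b$ a.e., and the kinetic one forces, via the \emph{strict} inequality $\frac4A+\frac4B>\frac4{A+B}$, that $\min(\sup u_1,\sup u_2)\le b$, together with Cauchy--Schwarz rigidity ($|u_i'|$ constant on each level set of each monotone piece). Feeding these constraints back in, and exchanging $u_1\leftrightarrow u_2$ when their top boundary values coincide, one finds that equality can hold only if one of $u_1,u_2$ is $\equiv b$ and the other is a symmetric bump based at level $b$. In that case, if the bump is nonconstant, one replaces the construction by one that absorbs the constant function as extra \emph{width} of the bump (each superlevel set $\{u_i>t\}$, $t\ge b$, enlarged by a fixed amount): this leaves $\|w'\|_2$ unchanged but strictly increases $\|w\|_p^p$ (a factor $(\sup u_i)^{p-2}>b^{p-2}$ enters), so $E(w,I)<E(u_1,I_1)+E(u_2,I_2)$; the only remaining possibility, both functions $\equiv b$, is exactly $u_1=u_2=c$. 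I expect the main work to be precisely this residual case analysis, together with checking that the rearranged profiles are genuinely in $H^1$ (no jumps — which uses that the ranges of the continuous $u_i$ are intervals); condition~(2) is what prevents using a single global symmetric rearrangement and forces the anchoring at level $b$ that makes the argument delicate.
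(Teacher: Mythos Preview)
Your argument is essentially correct, but it takes a route that is far more elaborate than the paper's. The paper does \emph{not} use rearrangements at all: it simply dilates each $u_i$ in its domain variable. Setting $\lambda=\|u_2\|_2^2/\|u_1\|_2^2$, it defines $\widetilde u_1(x)=u_1(x/(1+\lambda))$ on $(0,(1+\lambda)l_1)$ and $\widetilde u_2(x)=u_2(\lambda x/(1+\lambda))$ on $(0,(1+\lambda)l_2/\lambda)$. Both $\widetilde u_i$ then satisfy (1) and (2) automatically, and a three--line scaling computation gives
\[
E(\widetilde u_1,\widetilde I_1)+\lambda\,E(\widetilde u_2,\widetilde I_2)\le(1+\lambda)\bigl(E(u_1,I_1)+E(u_2,I_2)\bigr),
\]
strict unless both $u_i$ are constant; one then picks whichever $\widetilde u_i$ does better. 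The equality case reduces to a one--line algebraic check that the two constants must coincide. No coarea formula, no level--set counting, no residual case analysis.

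What each approach buys: the paper's scaling trick is dramatically shorter, works verbatim for complex--valued $u_i$ (your reduction to $|u_i|$ alters the boundary values in condition~(2), which is harmless for the application to $\mathcal B_n$ after first passing to $|u|$ on the whole graph, but does not prove the lemma as stated), and avoids the $H^1$--regularity bookkeeping for rearranged profiles. Your rearrangement argument, on the other hand, separates the mechanism cleanly into ``$\|\cdot\|_p$ goes up'' and ``$\|\cdot'\|_2$ goes down'', and produces a $w$ that is in some sense optimal among profiles with the prescribed boundary data; this extra structure is not needed here but is the kind of information that becomes useful in finer problems. Your handling of the strict--inequality endgame (exchange when the top boundary values coincide, then widen the bump) is correct once one notices that the potential constraint $u_2\ge b$ forces $u_2(0)=u_2(l_2)=b$, so the exchange is always available in the equality case and then forces $a=b$; you state this very tersely, and a reader could easily miss why the ``arc from $a<b$ to $b$'' scenario is excluded.
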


\begin{proof}
Set 
$$\lambda : = \f {\int_0^{l_2} |u_2(x)|^2 \, dx}{\int_0^{l_1} |u_1(x)|^2 \,dx},
$$ 
and define
$
{\widetilde{l}}_1 : =  (1+ \lambda) l_1$ if $l_1$ is finite, $l_2 : = \f {1 +
  \lambda} \lambda \, l_2$ if $l_2$ is finite,  
$\widetilde l_i : = + \infty$ if $l_i = + \infty$, and $\widetilde I_i
: = (0, \widetilde l_i)$.
Consider
 the functions $\widetilde u_i: \widetilde I_i \to \erre$
defined by
\be \nonumber 
\widetilde u_1 (x)  \ : = \ u_1 \left(  \f x {1 + \lambda} \right), \quad
  \widetilde u_2 (x)  \ : = \ u_2 \left(  \f {\lambda x} {1 + \lambda} \right).
\ee

\noindent
An elementary computation gives
\begin{equation} \label{elementary} \begin{split}
\int_0^{\widetilde l_1} | \widetilde u_1 ' (x) |^2 \, dx \ = & \ \f 1
    {1+\lambda} \int_0^{l_1} | u_1 ' (x) |^2 \, dx \\
\int_0^{\widetilde l_1} | \widetilde u_1  (x) |^q \, dx \ = & \ 
    {(1+\lambda)} \int_0^{l_1} | u_1  (x) |^q \, dx \\
\int_0^{\widetilde l_2} | \widetilde u_2 ' (x) |^2 \, dx
\ = & \ \f \lambda
    {1+\lambda} \int_0^{l_2} | u_2' (x) |^2 \, dx \\
\int_0^{\widetilde l_2}  | \widetilde u_2  (x) |^q \, dx
\ = & \ 
    \f {1+\lambda} \lambda \int_0^{l_2} | u_2  (x) |^q \, dx,
\end{split} \end{equation}
for any $q > 0$. Setting $q=2$, and owing to the definition of
$\lambda$, one immediately finds that for both $i=1,2$
\be \label{preservednorm}
\| \widetilde u_i \|^2_{L^2 (\widetilde I_i)} \ = \ \| u_1 \|^2_{L^2 (I_1)} + \|
u_2 \|^2_{L^2 (I_2)}
\ee
and
\be \label{preserveddir}
\widetilde u_i (0) = u_i (0), \quad \widetilde u_i (\widetilde l_i) = u_i (l_i).
\ee

\noindent
If either $u_1$ or $u_2$ is nonconstant, then
by \eqref{elementary} with $q = p$ one obtains
\be \begin{split} \label{strict} &
{E} (\widetilde u_1, \widetilde I_1) + \lambda \, {
  E} (\widetilde u_2,  \widetilde I_2) \
\\ = & \ \f 1   {2(1+\lambda)} \int_0^{l_1} | u_1 ' (x) |^2 \, dx - 
 \f {1+\lambda} p \int_0^{l_1} | u_1  (x) |^p \, dx \\ & \
+ \f {\lambda^2} {2(1+\lambda)}  \int_0^{l_2} | u_2' (x) |^2 \, dx
 - 
 \f {1+\lambda} p \int_0^{l_2} | u_2  (x) |^p \, dx \\ & \\
< & \ (1 + \lambda) \left( {E} (u_1,I_1) +   {E} (u_2,I_2) \right).
\end{split}
\ee
Then, for either $i=1$ or $i=2$ one gets 
\be \nonumber 
{E} (\widetilde u_i, \widetilde I_i) <  {E} (u_1,I_1) +
{E} (u_2,I_2).\ee
Denote this index by $\bar \i$ and define $w =
 u_{\bar \i}$,
$I = \widetilde I_{\bar \i}$. By \eqref{preservednorm},
\eqref{preserveddir}, and 
\eqref{strict}, items (1), (2), and (3) with the strict inequality
are proved for $u_1$ and 
$u_2$ that are not both constant.

\noindent
Finally, let us suppose that
$u_i \equiv \bar u_i$ for both $i=1,2$, where $\bar u_i$ is a constant. Then,
from \eqref{elementary} one has 
$$
{E} (\widetilde u_1, \widetilde I_1) \ = \ (1 + \lambda) \, {E} (u_1, I_1), \quad 
{E} (\widetilde u_2, \widetilde I_2) \ = \ \f{1 + \lambda} \lambda {E} (u_2, I_2)
$$
thus
$$
{E} (\widetilde u_1, \widetilde I_1) + \lambda \, {E} (\widetilde  u_2, \widetilde I_2)
= (1 + \lambda) \left(  {E} (u_1, I_1) +  {E} (u_2, I_2)
\right).
$$
As a consequence,
either ${E} (\widetilde u_1,  \widetilde I_1) < {E} (u_1, I_1) +
{E}  (u_2,  I_2)$ or  ${E} (\widetilde u_2,  \widetilde I_2) < {E} (u_1, I_1) +
{E}  (u_2,  I_2)$
unless ${E} (\widetilde u_1, \widetilde I_1) = {E} (\widetilde u_2,
\widetilde I_2) =  {E} (u_1, I_1) +
{E} (u_1, I_2)$.
By a straightforward computation, one finds that this implies $\bar u_1 =
\bar u_2$. The proof is complete.

\end{proof}

Now we are ready to prove Theorem \ref{theo}.

\begin{proof}[Proof of Theorem \ref{theo}]
Consider the $n-bridge$ graph ${\mathcal B}_n$  and a function $u \in H^1_\mu
({\mathcal B}_{n})$. If $n$ is odd, then ${\mathcal B}_n$
is Eulerian, so that the function $u$ unfolds to a function
$\widetilde u : \erre
\longrightarrow \comple$ s.t. 
$$ E (u, {\mathcal B}_n ) \ = \ E (\widetilde u, \erre) \ \geq \ E
(\phi_\mu, \erre)
$$
and the last inequality is an identity only if $\widetilde u (x)=
\phi_\mu ( x - y)$ for some $y$. But this is not possible, since
any value attained by $u$ at a vertex is attained at least $n$ times by 
$\widetilde u$. As $n\ge 3$, $\widetilde u$ cannot be equal to a
soliton. As a consequence,
$$  
E (u, {\mathcal B}_n ) \ > \ E (\phi_\mu, \erre).
$$
This inequality, together with Remark \ref{inf}, proves Theorem
\ref{theo} when $n$ is odd.  

When $n$ is
even, let $e_i$, $1 \leq i \leq n$, be the $i$-th edge between
the two halflines. As stated in Section 1, an interval
$I_i = (0, l_i)$ is associated  with the edge $e_i$. Focusing on 
$e_{1}, e_2$, 
by Lemma \ref{comparison} there exist an interval $I : = (0, l)$ and a function
$w_2 : I \longrightarrow \comple$ such that $\| w_2
\|_2^2 = \| u_{1} \|_2^2 + \| u_{2} \|_2^2$, $w_{2}(0) = u (v_1)$
and $w_{2}(l) = u (v_2)$, where $v_1$ and $v_2$ are the two vertices
corresponding to the origins of the two halflines. Then, the function
$$ w \ = \ (w_2, u_3, \dots , u_{n}, u_{n+1}, u_{n+2}),$$
where $u_{n+1}$ and $u_{n+2}$ are the components of $u$ on the two halflines,
is an element of $H^1_\mu ({\mathcal B}_{n-1})$. Furthermore, owing to
point (3) in 
Lemma \ref{comparison} again, one gets
\begin{equation*} \begin{split}
E (w,{\mathcal B}_{n-1}) \ = \ & E (w_2,I) + \sum_{j=3}^{n+2} E (u_j,
I_j) 
\ \leq \ \sum_{j=1}^{n+2} E (u_j,
I_j) \\
\ = \ & E (u, {\mathcal B}_{n}) .
\end{split} \end{equation*}
Since $n-1$ is odd, one concludes 
\begin{equation} \nonumber \begin{split}
E (\phi_\mu, \erre) \ < \ & E (w,{\mathcal B}_{n-1}) \ \leq \
 E (u, {\mathcal B}_{n})
\end{split} \end{equation}
and the proof is complete.

\end{proof}

\section{Possible extensions and perspectives}

The reduction technique described in the preceding section can be extended to treat more
general graphs.

For instance, a self-loop attached to an edge can be melted in a single edge, as illustrated in the following lemma.

\begin{lemma}[Removing self-loops] \label{selfloops}
Let $l_1>0$  and $l_2 \in (0,+\infty]$ and denote
  by $I_i$ the
interval $(0, l_i)$ and by $I$ the interval $(0,l_1+l_2)$.

Given a pair of functions $u_i \in H^1 (I_i)$,
with $u_1 (0) = u_1 (l_1) = u_2 (0),$ there exists  a function $w
  \in H^1 (I)$, such that
\begin{enumerate}
\item $\| w \|^2_{L^2 (I)} \ = \ \| u_1 \|^2_{L^2 (I_1)} + \| u_2 \|^2_{L^2 (I_2)}.$
\item $w(0) = u_1 (0)$ and $v(l) = u_2
  (l_2).$
\item $E (w,I) \, = \, E (u_1, I_1) + E (u_2, I_2)$.
\end{enumerate}
\end{lemma}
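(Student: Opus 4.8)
The plan is to produce $w$ by simply concatenating $u_1$ and $u_2$ head to tail, with no rescaling at all. Concretely, I would set $w:(0,l_1+l_2)\to\comple$ by
\[
w(x) \ = \ \begin{cases} u_1(x), & 0\le x\le l_1,\\[1mm] u_2(x-l_1), & l_1\le x\le l_1+l_2. \end{cases}
\]
The two formulas agree at the junction $x=l_1$ precisely because $u_1(l_1)=u_2(0)$ by hypothesis, so $w$ is a well-defined continuous function on $[0,l_1+l_2]$, with $w(0)=u_1(0)$ and $w(l_1+l_2)=u_2(l_2)$; this is item~(2). (The full matching chain $u_1(0)=u_1(l_1)=u_2(0)$ is exactly the continuity condition of $H^1(\mathcal{G})$ at the vertex carrying the self-loop, and the value inherited by the endpoint of the new edge, $w(0)=u_1(0)$, is the correct one.)

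Next I would check that $w\in H^1(I)$: each piece is $H^1$ on its own subinterval and they match continuously at $x=l_1$, so $w$ has a weak derivative on all of $(0,l_1+l_2)$, equal to $u_1'$ on $(0,l_1)$ and to $u_2'(\cdot-l_1)$ on $(l_1,l_1+l_2)$ (the value at the single point $l_1$ is irrelevant for the $L^2$ norm). Consequently every integral involved splits additively over the two subintervals; applying the change of variable $x\mapsto x-l_1$ on the second piece gives
\[
\int_0^{l_1+l_2}|w|^2 \ = \ \int_0^{l_1}|u_1|^2 + \int_0^{l_2}|u_2|^2,
\]
which is item~(1), and the same identity holds with $|w|^2$ replaced by $|w'|^2$ or by $|w|^p$; combining the latter two yields $E(w,I)=E(u_1,I_1)+E(u_2,I_2)$, i.e.\ item~(3).

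There is essentially no obstacle here: in contrast with Lemma~\ref{comparison}, the mass is preserved without contracting or dilating either interval, so the Dirichlet term and the $L^p$ term are transported isometrically and no energy is gained or lost — which is why the conclusion is an exact equality rather than a strict inequality. The one place where the hypotheses are genuinely used is the continuity of the concatenation at the junction, guaranteed by the matching condition; and the assumption $l_1<+\infty$ is what makes it possible to unroll the loop onto a finite segment and splice it onto $I_2$ in the first place (if $l_1$ were infinite the formula above would be meaningless).
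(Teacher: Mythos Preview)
Your proof is correct and is exactly the approach taken in the paper: define $w$ by concatenating $u_1$ on $(0,l_1)$ with $u_2(\cdot-l_1)$ on $(l_1,l_1+l_2)$, so that all norms and the energy split additively. The paper's proof is in fact terser than yours, giving only the definition of $w$ and leaving the verifications implicit.
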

\begin{proof}
It is sufficient to define  $w$ on $I$ as 
\be \nonumber
w (x)  =\begin{cases} u_1 (x) & \text{ if } x \in (0,l_1) \\ 
 u_2 (x-l_1) & \text{ if } x \in (l_1, l_1 + l_2). \end{cases}
\ee
\end{proof}

Lemmas \ref{comparison} and \ref{selfloops} can be used 
in order to develop a ``haircut'' strategy suitable to work
on a larger class of graphs. Indeed, consider a graph $\mathcal
G$ with $N_e$ edges and  a function $u$ in $H^1_\mu ({\mathcal G})$. 
In several cases one may use  Lemma \ref{comparison} or Lemma \ref{selfloops} 
to construct a graph ${\mathcal G}'$ with $N_e - 1 $
edges and a function $w$ in $H^1_\mu ({\mathcal G}')$ such that
$$
E (w, {\mathcal G'}) \ \leq \ E (u, {\mathcal G}).
$$
This could be the starting point of an inductive procedure aimed at reducing any graph (by removing one edge at a time) to 
simpler graphs that one is able to handle explicitly.
This is exactly described in the forthcoming paper \cite{ast14}.





\end{document}